\begin{document}
\bibliographystyle{plain}
\newfont{\teneufm}{eufm10}
\newfont{\seveneufm}{eufm7}
\newfont{\fiveeufm}{eufm5}
%%%%%%%%%%%%%%%%%%%%%%%%%%%%%%%%%
%
%  allow automatic size selection in math mode
%
%%%%%%%%%%%%%%%%%%%%%%%%%%%%%%%%%
\newfam\eufmfam
              \textfont\eufmfam=\teneufm \scriptfont\eufmfam=\seveneufm
              \scriptscriptfont\eufmfam=\fiveeufm
%%%%%%%%%%%%%%%%%%%%%%%%%%%%%%%%%
%%%%%%%%%%%%%%%%%%%
\def\bbbr{{\rm I\!R}}
\def\bbbm{{\rm I\!M}}
\def\bbbn{{\rm I\!N}}
\def\bbbf{{\rm I\!F}}
\def\bbbh{{\rm I\!H}}
\def\bbbk{{\rm I\!K}}
\def\bbbp{{\rm I\!P}}
\def\bbbone{{\mathchoice {\rm 1\mskip-4mu l} {\rm 1\mskip-4mu l}
{\rm 1\mskip-4.5mu l} {\rm 1\mskip-5mu l}}}
\def\bbbc{{\mathchoice {\setbox0=\hbox{$\displaystyle\rm C$}\hbox{\hbox
to0pt{\kern0.4\wd0\vrule height0.9\ht0\hss}\box0}}
{\setbox0=\hbox{$\textstyle\rm C$}\hbox{\hbox
to0pt{\kern0.4\wd0\vrule height0.9\ht0\hss}\box0}}
{\setbox0=\hbox{$\scriptstyle\rm C$}\hbox{\hbox
to0pt{\kern0.4\wd0\vrule height0.9\ht0\hss}\box0}}
{\setbox0=\hbox{$\scriptscriptstyle\rm C$}\hbox{\hbox
to0pt{\kern0.4\wd0\vrule height0.9\ht0\hss}\box0}}}}
\def\bbbq{{\mathchoice {\setbox0=\hbox{$\displaystyle\rm
Q$}\hbox{\raise
0.15\ht0\hbox to0pt{\kern0.4\wd0\vrule height0.8\ht0\hss}\box0}}
{\setbox0=\hbox{$\textstyle\rm Q$}\hbox{\raise
0.15\ht0\hbox to0pt{\kern0.4\wd0\vrule height0.8\ht0\hss}\box0}}
{\setbox0=\hbox{$\scriptstyle\rm Q$}\hbox{\raise
0.15\ht0\hbox to0pt{\kern0.4\wd0\vrule height0.7\ht0\hss}\box0}}
{\setbox0=\hbox{$\scriptscriptstyle\rm Q$}\hbox{\raise
0.15\ht0\hbox to0pt{\kern0.4\wd0\vrule height0.7\ht0\hss}\box0}}}}
\def\bbbt{{\mathchoice {\setbox0=\hbox{$\displaystyle\rm
T$}\hbox{\hbox to0pt{\kern0.3\wd0\vrule height0.9\ht0\hss}\box0}}
{\setbox0=\hbox{$\textstyle\rm T$}\hbox{\hbox
to0pt{\kern0.3\wd0\vrule height0.9\ht0\hss}\box0}}
{\setbox0=\hbox{$\scriptstyle\rm T$}\hbox{\hbox
to0pt{\kern0.3\wd0\vrule height0.9\ht0\hss}\box0}}
{\setbox0=\hbox{$\scriptscriptstyle\rm T$}\hbox{\hbox
to0pt{\kern0.3\wd0\vrule height0.9\ht0\hss}\box0}}}}
\def\bbbs{{\mathchoice
{\setbox0=\hbox{$\displaystyle     \rm S$}\hbox{\raise0.5\ht0\hbox
to0pt{\kern0.35\wd0\vrule height0.45\ht0\hss}\hbox
to0pt{\kern0.55\wd0\vrule height0.5\ht0\hss}\box0}}
{\setbox0=\hbox{$\textstyle        \rm S$}\hbox{\raise0.5\ht0\hbox
to0pt{\kern0.35\wd0\vrule height0.45\ht0\hss}\hbox
to0pt{\kern0.55\wd0\vrule height0.5\ht0\hss}\box0}}
{\setbox0=\hbox{$\scriptstyle      \rm S$}\hbox{\raise0.5\ht0\hbox
to0pt{\kern0.35\wd0\vrule height0.45\ht0\hss}\raise0.05\ht0\hbox
to0pt{\kern0.5\wd0\vrule height0.45\ht0\hss}\box0}}
{\setbox0=\hbox{$\scriptscriptstyle\rm S$}\hbox{\raise0.5\ht0\hbox
to0pt{\kern0.4\wd0\vrule height0.45\ht0\hss}\raise0.05\ht0\hbox
to0pt{\kern0.55\wd0\vrule height0.45\ht0\hss}\box0}}}}
\def\bbbz{{\mathchoice {\hbox{$\sf\textstyle Z\kern-0.4em Z$}}
{\hbox{$\sf\textstyle Z\kern-0.4em Z$}}
{\hbox{$\sf\scriptstyle Z\kern-0.3em Z$}}
{\hbox{$\sf\scriptscriptstyle Z\kern-0.2em Z$}}}}
\def\ts{\thinspace}

\newtheorem{theorem}{Theorem}
\newtheorem{lemma}[theorem]{Lemma}
\newtheorem{claim}[theorem]{Claim}
\newtheorem{cor}[theorem]{Corollary}
\newtheorem{prop}[theorem]{Proposition}
\newtheorem{definition}[theorem]{Definition}
\newtheorem{remark}[theorem]{Remark}
\newtheorem{question}[theorem]{Open Question}

\def\qed{\ifmmode
\squareforqed\else{\unskip\nobreak\hfil
\penalty50\hskip1em\null\nobreak\hfil\squareforqed
\parfillskip=0pt\finalhyphendemerits=0\endgraf}\fi}

\def\squareforqed{\hbox{\rlap{$\sqcap$}$\sqcup$}}

\def \C {{\mathbb C}}
\def \F {{\mathbb F}}
\def \L {{\mathbb L}}
\def \K {{\mathbb K}}
\def \Q {{\mathbb Q}}
\def \Z {{\mathbb Z}}
\def\cA{{\mathcal A}}
\def\cB{{\mathcal B}}
\def\cC{{\mathcal C}}
\def\cD{{\mathcal D}}
\def\cE{{\mathcal E}}
\def\cF{{\mathcal F}}
\def\cG{{\mathcal G}}
\def\cH{{\mathcal H}}
\def\cI{{\mathcal I}}
\def\cJ{{\mathcal J}}
\def\cK{{\mathcal K}}
\def\cL{{\mathcal L}}
\def\cM{{\mathcal M}}
\def\cN{{\mathcal N}}
\def\cO{{\mathcal O}}
\def\cP{{\mathcal P}}
\def\cQ{{\mathcal Q}}
\def\cR{{\mathcal R}}
\def\cS{{\mathcal S}}
\def\cT{{\mathcal T}}
\def\cU{{\mathcal U}}
\def\cV{{\mathcal V}}
\def\cW{{\mathcal W}}
\def\cX{{\mathcal X}}
\def\cY{{\mathcal Y}}
\def\cZ{{\mathcal Z}}
\newcommand{\rmod}[1]{\: \mbox{mod}\: #1}

\def\tcN{\cN^\mathbf{c}}
\def\F{\mathbb F}
\def\Tr{\operatorname{Tr}}
\def\mand{\qquad \mbox{and} \qquad}
\renewcommand{\vec}[1]{\mathbf{#1}}
\def\eqref#1{(\ref{#1})}
\newcommand{\ignore}[1]{}
\hyphenation{re-pub-lished}
\parskip 1.5 mm
\def\lln{{\mathrm Lnln}}
\def\Res{\mathrm{Res}\,}
\def\F{{\bbbf}}
\def\Fp{\F_p}
\def\fp{\Fp^*}
\def\Fq{\F_q}
\def\ff{\F_2}
\def\ffn{\F_{2^n}}
\def\K{{\bbbk}}
\def \Z{{\bbbz}}
\def \N{{\bbbn}}
\def\Q{{\bbbq}}
\def \R{{\bbbr}}
\def \P{{\bbbp}}
\def\Zm{\Z_m}
\def \Um{{\mathcal U}_m}
\def \Bf{\frak B}
\def\Km{\cK_\mu}
\def\va {{\mathbf a}}
\def \vb {{\mathbf b}}
\def \vc {{\mathbf c}}
\def\vx{{\mathbf x}}
\def \vr {{\mathbf r}}
\def \vv {{\mathbf v}}
\def\vu{{\mathbf u}}
\def \vw{{\mathbf w}}
\def \vz {{\mathbfz}}
\def\\{\cr}
\def\({\left(}
\def\){\right)}
\def\fl#1{\left\lfloor#1\right\rfloor}
\def\rf#1{\left\lceil#1\right\rceil}
\def\flq#1{{\left\lfloor#1\right\rfloor}_q}
\def\flp#1{{\left\lfloor#1\right\rfloor}_p}
\def\flm#1{{\left\lfloor#1\right\rfloor}_m}
\def\Al{{\sl Alice}}
\def\Bob{{\sl Bob}}
\def\Or{{\mathcal O}}
\def\inv#1{\mbox{\rm{inv}}\,#1}
\def\invM#1{\mbox{\rm{inv}}_M\,#1}
\def\invp#1{\mbox{\rm{inv}}_p\,#1}
\def\Ln#1{\mbox{\rm{Ln}}\,#1}
\def \nd {\,|\hspace{-1.2mm}/\,}
\def\ord{\mu}
\def\E{\mathbf{E}}
\def\Cl{{\mathrm {Cl}}}
\def\epp{\mbox{\bf{e}}_{p-1}}
\def\ep{\mbox{\bf{e}}_p}
\def\eq{\mbox{\bf{e}}_q}
\def\bm{\bf{m}}
\newcommand{\floor}[1]{\lfloor {#1} \rfloor}
\newcommand{\comm}[1]{\marginpar{
\vskip-\baselineskip
\raggedright\footnotesize
\itshape\hrule\smallskip#1\par\smallskip\hrule}}
\def\rem{{\mathrm{\,rem\,}}}
\def\dist {{\mathrm{\,dist\,}}}
\def\etal{{\it et al.}}
\def\ie{{\it i.e. }}
\def\veps{{\varepsilon}}
\def\eps{{\eta}}
\def\ind#1{{\mathrm {ind}}\,#1}
               \def \MSB{{\mathrm{MSB}}}
\newcommand{\abs}[1]{\left| #1 \right|}

%%%%%%%%%%%%%%%  Topmatter %%%%%%%%%%%%%%%%%%
\title {On some conjectures on the monotonicity of some combinatorial sequences}
%
%\subjclass[2010]{Primary 11G20,11Y16 ; Secondary 11T23}
%\keywords{Elliptic curves, generators, finite fields}

\author{{\sc Florian~Luca}\\
Centro de Ciencias Matem{\'a}ticas,\\
Universidad Nacional Autonoma de M{\'e}xico,\\
C.P. 58089, Morelia, Michoac{\'a}n, M{\'e}xico\\
{\tt fluca@matmor.unam.mx}
\and
{\sc Pantelimon St\u anic\u a}  \\
Naval Postgraduate School \\
Applied Mathematics Department \\
Monterey, CA 93943, USA\\
{\tt pstanica@nps.edu}
}

\date{\today}
\pagenumbering{arabic}

\maketitle

\begin{abstract} Here, we prove some conjectures on the monotony of combinatorial sequences from the recent preprint of Zhi--Wei Sun~\cite{Sun}.
\end{abstract}

\section{Introduction}

In \cite{Sun}, it was conjectured that the sequences of general term $a_{n+1}^{1/(n+1)}/a_n^{1/n}$, or $a_n^{1/n}$ are monotonically increasing (or decreasing) for all $n\ge n_0$, for a large class of sequences ${\bf a}=(a_n)_{n\ge 1}$ appearing in combinatorics. Two of these conjectures were confirmed recently in~\cite{HSW12}. Here, we confirm eight more of these conjectures (some partially, up to an explicit starting index $n_0$).

For a sequence ${\bf u}=(u_n)_{n\ge 1}$ and a positive integer $k$ we write
$$
\Delta^{(k)} u_n=u_{n+k}-\binom{k}{1} u_{n+k-1}+\cdots+(-1)^j \binom{k}{j} u_{n+k-j}+\cdots+(-1)^k u_n,
$$
for the $k$th iterated difference of $(u_{n+k},\ldots,u_n)$.

\section{Bernoulli, Tangent and Euler numbers}

The Bernoulli numbers $B_0, B_1, B_2, \ldots$ are rational numbers given by
$$
B_0=1\quad {\text{\rm and}}\quad \sum_{k=0}^n \binom{n+1}{k} B_k=0\quad {\text{\rm for~all}}\quad n\ge 1,
$$
whose exponential generating function is
 \[
 \frac{z}{e^z-1}=\sum_{k=0}^\infty \frac{B_n z^n}{n!}.
 \]
It is well-known that $B_{2n+1}=0$ for all $n\ge 1$.
Closely connected to the Bernoulli numbers are the Tangent numbers $T_n$ and the Euler numbers $E_n$, defined
by their exponential generating functions
\begin{eqnarray}
\tan z&=&\sum_{k=0}^\infty (-1)^{k+1} \frac{T_{2k+1} z^{2k+1}}{(2k+1)!},\\
\sec z&=&\sum_{k=0}^\infty (-1)^k \frac{E_{2k} z^{2k}}{(2k)!}.
\end{eqnarray}
Thus, $E_{2k-1}=0, T_{2k}=0$, $k\geq 1$.
We recall Stirling's formula
\begin{equation}
\label{eq:Stirling}
n!=(n/e)^n {\sqrt{2\pi n}}\, e^{\theta_n},\quad {\text{\rm where}}\quad \frac{1}{12n+1}<\theta_n<\frac{1}{12n},\quad {\text{\rm for~all}}\quad n\ge 1.
\end{equation}

Our first result gives an affirmative answer to Conjecture 2.8 and 3.5 in \cite{Sun}.

\begin{sloppypar}
\begin{theorem}
\label{thm:Bern}
The sequence $(|B_{2n}|^{1/n})_{n\ge 1}$, $(|T_{2n-1}|^{1/n})_{n\ge 1}$, $(|E_{2n}|^{1/n})_{n\ge 1}$ are all increasing. Furthermore, the sequences $(|B_{2n+2}|^{1/(n+1)}/|B_{2n}|^{1/n})_{n\ge 2}$,
$(|T_{2n+1}|^{1/(n+1)}/|T_{2n-1}|^{1/n})_{n\ge 1}$, $(|E_{2n+2}|^{1/(n+1)}/|E_{2n}|^{1/n})_{n\ge 1}$ are decreasing.
\end{theorem}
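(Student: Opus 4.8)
The plan is to replace each sequence by an exact closed form in which the dependence on $n$ is transparent, and then read off the two assertions as, respectively, a first-difference inequality and a second-difference (concavity) inequality for the function $h(n):=\frac1n\log a_n$, where $a_n$ denotes the relevant quantity $|B_{2n}|$, $|T_{2n-1}|$ or $|E_{2n}|$. The starting point are the classical evaluations
\[
|B_{2n}|=\frac{2\,(2n)!}{(2\pi)^{2n}}\,\zeta(2n),\qquad
|E_{2n}|=\frac{2^{2n+2}(2n)!}{\pi^{2n+1}}\,\beta(2n+1),
\]
together with $|T_{2n-1}|=\frac{2^{2n}(2^{2n}-1)}{2n}\,|B_{2n}|=\frac{2\,(2n-1)!}{(\pi/2)^{2n}}\,\lambda(2n)$, where $\zeta$ is the Riemann zeta function, $\lambda(s)=(1-2^{-s})\zeta(s)$, and $\beta$ is the Dirichlet beta function. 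In all three cases the expression has the shape $a_n=\rho^{-2n}\,\Gamma(2n+c)\,w_n$ with $\rho,c$ explicit and $w_n$ an arithmetic factor that tends to its limit \emph{super-exponentially}: the quantities $\zeta(2n)-1$, $\lambda(2n)-1$ and $1-\beta(2n+1)$ are all $O(2^{-2n})$, while each of $\zeta(2n),\lambda(2n),\beta(2n+1)$ is monotone and trapped between $1$ and an explicit constant such as $\zeta(2)=\pi^2/6$. Hence $\log w_n$ has first and second differences of size $O(2^{-2n})$, negligible next to the $1/n$ and $1/n^2$ contributions of the Gamma factor. I would record these facts, and the common formula $\log a_n=-2n\log\rho+\log\Gamma(2n+c)+\log w_n$, as the shared input for all three sequences.

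For the increasing statements, set $b_n=\log a_n$; since $n\mapsto b_n/n$ is increasing exactly when $n(b_{n+1}-b_n)>b_n$, I would expand
\[
b_{n+1}-b_n=-2\log\rho+\log\frac{\Gamma(2n+2+c)}{\Gamma(2n+c)}+\log\frac{w_{n+1}}{w_n}
\]
and compare it with $b_n/n$. Feeding in Stirling's formula \eqref{eq:Stirling} in the form $\log\Gamma(2n+c)=2n\log(2n)-2n+O(\log n)$ makes the two dominant terms $2\log(2n)$ cancel, leaving a positive excess of $+2$ on the side of $b_{n+1}-b_n$, with all remaining terms of order $(\log n)/n$. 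The inequality thus reduces to an elementary estimate valid for all large $n$; the few small values of $n$ (where the $O((\log n)/n)$ corrections are not yet dominated by the constant $2$, and where $a_n<1$) I would dispose of by direct numerical verification, which simultaneously fixes the exact starting index.

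The decreasing statements say precisely that $h(n)=b_n/n$ is strictly concave, i.e. $\Delta^{(2)}h(n)=h(n+2)-2h(n+1)+h(n)<0$. The additive constant $-2\log\rho$ and the limiting constant inside $\log w_n$ drop out of the second difference, so the issue is the concavity of $\frac1n\log\Gamma(2n+c)$, which by Stirling equals $2\log n+(\text{const})+O\!\big((\log n)/n\big)$. Its decisive term is $2\log n$, for which
\[
\Delta^{(2)}\big[2\log n\big]=2\log\Big(1-\tfrac{1}{(n+1)^2}\Big)\le-\frac{2}{(n+1)^2},
\]
a strictly negative quantity of order $1/n^2$. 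It then remains to show that the second differences of the correction terms are smaller than $2/(n+1)^2$ in absolute value: the $\log w_n$ part contributes $O(2^{-2n})$, the $\frac{1}{2n}\log(4\pi n)$ part contributes $O((\log n)/n^3)$, and the Stirling remainder $\theta_{2n}/n\in(0,\tfrac{1}{24n^2})$ contributes at most $\tfrac{1}{12n^2}$ by the triangle inequality—comfortably below $2/(n+1)^2$. Combining the bounds gives $\Delta^{(2)}h(n)<0$ for $n\ge n_0$, after which a direct check settles the finitely many remaining indices.

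I expect the main obstacle to be exactly this last bookkeeping in the decreasing case. Unlike the increasing case, where the dominant terms leave a clean gap of $+2$, the concavity is driven by a term of order only $1/n^2$, and the crude bound on $\theta_{2n}$ produces a competing contribution of the \emph{same} order $1/n^2$. One must therefore track the constants carefully—or push Stirling's expansion one term further so that the remainder becomes $O(1/n^3)$—to guarantee that the negative main term wins for every $n\ge n_0$, and then verify the small cases and the precise value of $n_0$ by hand. Since the three sequences share the common form above, a single such analysis, applied with the respective data $(\rho,c,w_n)$, yields all six assertions at once.
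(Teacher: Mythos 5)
Your proposal is correct and takes essentially the same route as the paper: exact closed forms whose zeta-type factors are $1+O(2^{-2n})$, Stirling's formula with explicit remainder $\theta_{2n}$, positivity of the first difference and negativity of the second difference of $\tfrac1n\log a_n$ (the latter driven by $\Delta^{(2)}(2\log n)<-2/(n+1)^2$ against corrections of size $O(1/(12n^2))$, $O((\log n)/n^3)$ and $O(2^{-2n})$), with the finitely many small indices checked by hand. One harmless slip: the limiting constant in $\log w_n$ does not literally drop out of the second difference of $\tfrac1n\log a_n$---it contributes a term of order $1/n^3$---but your bookkeeping already absorbs terms of that size, so the argument is unaffected.
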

\end{sloppypar}

\begin{proof}
We start with the Bernoulli numbers. We use the formula
$$
|B_{2n}|=\frac{2(2n!)}{(2\pi)^{2n}}\zeta(2n).
$$
Clearly,
$$
\zeta(2n)=1+\frac{1}{2^{2n}}+\frac{1}{3^{2n}}+\cdots=1+\eta_n,
$$
where
\begin{equation}
\begin{split}
\label{eq:etan}
\eta_n&=\frac{1}{2^{2n}}\left(1+\frac{1}{1.5^{2n}}+\frac{1}{2^{2n}}+\cdots\right)\\
&\le \frac{1}{2^{2n}} \left(1+\frac{1}{1.5^{2n}}+2(\zeta(2n)-1)\right)\le
\frac{3}{2^{2n}}
\end{split}
\end{equation}
for $n\ge 1$.
Thus, putting $|B_{2n}|=\exp v_n$, we have that
\begin{eqnarray}
\label{eq:10}
\frac{v_n}{n} & = & \frac{\log\left(2 (2n)! (2\pi)^{-2n} (1+\eta_n)\right)}{n}\nonumber\\
& = &
\frac{\log 2}{n} +\frac{\log (2n)!}{n}-2\log (2\pi) +\frac{\log(1+\eta_n)}{n}\nonumber\\
& = &
\frac{\log 2}{n} + 2\log(2n)-2+\frac{\log(4\pi n)}{2n}+\frac{\theta_{2n}}{n} %\nonumber\\
%&&\qquad\qquad\qquad
-2\log (2\pi) +\frac{\log(1+\eta_n)}{n} \nonumber\\
& = &
2\log n+c+\frac{\log n}{2n}+\frac{\log(16\pi)}{2n}+\frac{\theta_{2n}}{n}+\frac{\log (1+\eta_n)}{n},
\end{eqnarray}
where $c=2\log 2-2-2\log (2\pi)=-2-2\log \pi$.
Taking the first iterated difference in \eqref{eq:10} above, we get
\begin{eqnarray*}
\Delta^{(1)} \left(\frac{v_n}{n}\right) & = & \frac{v_{n+1}}{n+1}-\frac{v_n}{n}\\
& = & 2\log\left(1+\frac{1}{n}\right)+\left(\frac{\log(n+1)}{2(n+1)}-\frac{\log n}{2n}\right)-\frac{\log(16 \pi)}{2n(n+1)} \\
& + & \left(\frac{\theta_{2n+2}}{n+1}-\frac{\theta_{2n}}{n}\right)+
\left(\frac{\log(1+\eta_{n+1})}{n+1}-\frac{\log(1+\eta_n)}{n}\right).
\end{eqnarray*}
By the Intermediate Value Theorem
$$
\frac{\log(n+1)}{n+1}-\frac{\log n}{n}= \frac{d}{dx} \left(\frac{\log x}{x} \right)\Big|_{x=\zeta\in [n,n+1)},
$$
therefore
$$
\left|\frac{\log(n+1)}{n+1}-\frac{\log n}{n}\right|< \frac{\log(n+1)}{n^2}.
$$
Using the inequalities
$$
\log\left(1+\frac{1}{n}\right)\ge \frac{1}{2n}\quad {\text{\rm for}}\quad n\geq 1,
$$
\begin{equation}
\label{eq:logg}
\log(1+x)\le x\quad {\text{\rm for~all~real~numbers}}\quad x,
\end{equation}
with $x=\eta_n$ and $x=\eta_{n+1}$, inequality \eqref{eq:etan} together with  Stirling's formula~\eqref{eq:Stirling},
we get
\begin{equation}
\label{eq:Delta1Ber}
\Delta^{(1)} \left(\frac{v_n}{n}\right)\ge \frac{1}{n}-\frac{\log(n+1)}{2n^2}-\frac{\log(16\pi)}{2n(n+1)}-\frac{1}{12n^2}-\frac{6}{2^{2n}n},
\end{equation}
and the above expression is positive for $n\ge 3$. This proves that $|B_{2n}|^{1/n}$ is increasing for $n\ge 3$, and by hand one checks that this is in fact true for all $n\ge 1$.

Taking now the second iterated difference in \eqref{eq:10}, one gets
\begin{eqnarray}
\label{eq:Delta2Ber}
\Delta^{(2)} \left(\frac{v_n}{n}\right) & = & \left(\frac{v_{n+2}}{n+2}-\frac{v_{n+1}}{n+1}\right)-\left(\frac{v_{n+1}}{n+1}-\frac{v_n}{n}\right)\nonumber\\
& = & 2\left(\log\left(1+\frac{1}{n+1}\right)-\log\left(1+\frac{1}{n}\right)\right)\nonumber\\
& + & \frac{1}{2}\left(\frac{\log(n+2)}{(n+2)}-\frac{2\log(n+1)}{(n+1)}+
\frac{\log n}{n}\right)\nonumber\\
& + & \frac{\log(16\pi)}{n(n+1)(n+2)}\nonumber\\
& + & \left(\frac{\theta_{2n+4}}{n+2}-\frac{2\theta_{2n+2}}{n+1}+\frac{\theta_{2n}}{n}\right)\nonumber\\
& + &
\left(\frac{\log(1+\eta_{n+2})}{n+2}-\frac{2\log(1+\eta_{n+1})}{n+1}+\frac{\log(1+\eta_n)}{n}\right).
\end{eqnarray}
We have
\begin{equation}
\label{eq:4}
\log\left(1+\frac{1}{n+1}\right)-\log\left(1+\frac{1}{n}\right)=\log\left(1-\frac{1}{(n+1)^2}\right)< -\frac{1}{(n+1)^2}.
\end{equation}
\begin{eqnarray}
\label{eq:5}
\frac{\log(n+2)}{n+2} & - & \frac{2\log(n+1)}{n+1}+
\frac{\log n}{n}  = (\log n)\left(\frac{1}{n+2}-\frac{2}{n+1}+\frac{1}{n}\right)\nonumber\\
& + & \frac{1}{n+2} \log\left(1+\frac{2}{n}\right)-\frac{2}{n+1}\log\left(1+\frac{1}{n}\right)\nonumber\\
& \le &
\frac{2\log n}{n(n+1)(n+2)}
+  \frac{2}{n(n+2)}-\frac{2}{n(n+1)}+\frac{2}{(n+1)n^2}\nonumber\\
& = & \frac{2\log n+4}{n(n+1)(n+2)},
\end{eqnarray}
where we have used the fact that
$$
x-x^2<\log(1+x)<x\quad {\text{\rm holds~for~ all}}\quad x\in (0,1/2).
$$
Next,
\begin{equation}
\label{eq:6}
\frac{\theta_{2n+4}}{n+2}-\frac{2\theta_{2n+2}}{n+1}+\frac{\theta_{2n}}{n}<\frac{1}{6n^2},
\end{equation}
by \eqref{eq:Stirling}. Further, by using inequality \eqref{eq:logg} with $x=\eta_n,~x=\eta_{n+1},~ x=\eta_{n+2}$ together with inequality 
\eqref{eq:etan}, we get
\begin{equation}
\label{eq:7}
\left|\frac{\log(1+\eta_{n+2})}{n+2}-\frac{2\log(1+\eta_{n+1})}{n+1}+\frac{\log(1+\eta_n)}{n}\right|<\frac{12}{2^{2n}n}
\end{equation}
for $n\ge 3$.
Putting all these together, we have
\begin{equation}
\label{eq:Delta2}
\Delta^{(2)} \left(\frac{v_n}{n}\right)\le -\frac{2}{(n+1)^2}+\frac{\log n+2+2\log(16\pi)}{2n(n+1)(n+2)}+\frac{1}{6 n^2}+\frac{12}{n 2^{2n}},
\end{equation}
and this last expression is negative for $n\ge 4$. So, the sequence of general term $|B_{2n+2}|^{1/(n+1)}/|B_{2n}|^{1/n}$ is increasing for $n\ge 4$, and  then one checks by hand that it is also increasing for $n=2,3$.

We next deal with the Tangent numbers. We have (see \cite{BBD}),
\begin{equation}
\label{eq:Tan1}
|T_{2n-1}|=2^{2n}(2^{2n}-1) \frac{|B_{2n}|}{2n}=4^{2n} \left(\frac{2(2n)!}{(2\pi)^{2n}}\right) \left(\frac{1}{2n}\right) \left(\left(1-\frac{1}{2^{2n}}\right)\zeta(2n)\right).
\end{equation}
Since
$$
1<\left(1-\frac{1}{2^{2n}}\right)\zeta(2n)<\zeta(2n),
$$
it follows by \eqref{eq:etan} that
\begin{equation}
\label{eq:etan1}
\left(1-\frac{1}{2^{2n}}\right)\zeta(2n)=1+\eta_n\qquad {\text{\rm for~some}}\qquad 0<\eta_n<\frac{3}{2^{2n}}.
\end{equation}
Writing $|T_{2n-1}|=\exp v_n$ and following along calculation \eqref{eq:10}, we get that
\begin{eqnarray}
\label{eq:Tan}
\frac{v_n}{n} & = & 4\log 2+\frac{1}{n} \log\left(\frac{2(2n)!}{(2\pi)^{2n}}\right) -\frac{\log(2n)}{n}+\frac{\log(1+\eta_n)}{n}\nonumber\\
& = & 2\log n+c_1-\frac{\log n}{2n}+\frac{\log(4\pi)}{2n}+\frac{\theta_{2n}}{n}+\frac{\log (1+\eta_n)}{n},
\end{eqnarray}
where $c_1=4\log 2+c=4\log 2-2-2\log \pi$. Comparing the last row of \eqref{eq:10} with the last row of \eqref{eq:Tan}, we see that the only differences are in the value of $c$, the fact that the term $(\log n)/(2n)$ has now changed sign and the positive constant $\log(16 \pi)$ has been replaced by the smaller positive constant $\log(4\pi)$. Following along the arguments from \eqref{eq:Delta1Ber} and \eqref{eq:Delta2Ber}, we note that such changes do not induce any significant change in the subsequent argument and so we get that the first iterated difference
of $v_n/n$ is positive
for all $n\ge 3$ and the second iterated difference of $v_n/n$ is negative for $n\ge 4$. The remaining small values of $n$ are checked by hand.

Regarding the Euler numbers, we use the inequality
 \begin{equation}
 \label{eq:Euler}
\frac{4^{2n+1} (2n)!}{\pi^{2n+1}}> |E_{2n}|>\frac{4^{2n+1} (2n)!}{\pi^{2n+1}} \left(\frac{1}{1+3^{-2n-1}}\right).
\end{equation}
Since
$$
1>\frac{1}{1+3^{-2n-1}}>1-\frac{1}{3^{2n+1}},
$$
we can write
$$
|E_{2n}|= 16^{n} \left(\frac{2 (2n)!}{(2\pi)^{2n}}\right) \left(\frac{2}{\pi}\right) (1+\eta_n),
$$
where
\begin{equation}
\label{eq:boundonetan}
0<|\eta_n|<\frac{1}{3^{2n+1}}<\frac{3}{2^{2n}}.
\end{equation}
Writing $|E_{2n}|=\exp v_n$ and following along calculation \eqref{eq:10}, we get that
\begin{eqnarray}
\label{eq:Euler1}
\frac{v_n}{n} & = & 4\log 2+\frac{1}{n} \log\left(\frac{2(2n)!}{(2\pi)^{2n}}\right) +\frac{\log (2/\pi)}{n}+\frac{\log(1+\eta_n)}{n}\nonumber\\
& = & 2\log n+c_1+\frac{\log n}{2n}+\frac{\log(64/\pi)}{2n}+\frac{\theta_{2n}}{n}+\frac{\log (1+\eta_n)}{n}.
\end{eqnarray}
Since now $\eta_n$ is negative, instead of \eqref{eq:logg} we need to use
$$
|\log(1-x)|\le 2|x|\quad {\text{\rm for}}\quad x\in [0,1/2]
$$ 
with $x=-\eta_n$ and $n\ge 2$. Comparing the last row of \eqref{eq:10} with the last row of \eqref{eq:Euler1}, we see that the only differences are in the value of $c$ and the positive constant $\log(16 \pi)$ has been replaced by the smaller positive constant $\log(64/\pi)$. So, in \eqref{eq:Delta1Ber} and \eqref{eq:Delta2}, aside from replacing $\log(16\pi)$ by $\log(64/\pi)$, also the terms $6/2^{2n}n$ and $12/2^{2n}n$ need to be replaced by their doubles $12/2^{2n}n$ and $24/2^{2n}n$, respectively. As in the case of the Tangent numbers,  such changes do not induce any significant change and so we get that the first iterated difference of $v_n/n$ is positive
for all $n\ge 3$ and the second iterated difference of $v_n/n$ is negative for $n\ge 4$, and the remaining values are checked by hand.
\end{proof}

\section{Ap\'ery, Delannoy and Franel numbers}

Let ${\bf r}=(r_0,r_1,\ldots,r_m)$ be fixed nonnegative integers and put
\begin{equation}
\label{eq:Sn}
S^{({\bf r})}(n)=\sum_{k=0}^n \binom{n}{k}^{r_0} \binom{n+k}{k}^{r_1}\cdots \binom{n+km}{k}^{r_m}\qquad {\text{\rm for}}\qquad n\ge 0.
\end{equation}
In what follows, we put $r=r_0+\cdots+r_m$. We assume that $r_0>0$. When ${\bf r}=(r)$ for some positive integer $r$, we get that
\begin{equation}
\label{eq:ris2}
S^{(r)}(n)=\sum_{k=0}^n \binom{n}{k}^r=b_n^{(r)}\quad {\text{\rm for~all}}\quad n\ge 0,
\end{equation}
where $b^{(1)}_n=2^n$, $b^{(2)}_n=\binom{2n}{n}$ is the middle binomial coefficient, and $b^{(3)}_n$ is the Franel number. When ${\bf r}=(1,1)$, we get that
\begin{equation}
\label{eqris1,1}
S^{(1,1)}(n)=\sum_{k=0}^n \binom{n}{k}^2\binom{n+k}{k}^2=d_n\quad {\text{\rm for~all}}\quad n\ge 0,
\end{equation}
is the central Delannoy number. When  ${\bf r}=(2,2)$, we get that
\begin{equation}
\label{eq:ris2,2}
S^{(2,2)}(n)=\sum_{k=0}^n \binom{n}{k}^2\binom{n+k}{k}^2=A_n\quad {\text{\rm for~all}}\quad n\ge 0,
\end{equation}
where $A_n$ is the $n$th Ap\'ery number. The next result answers in the affirmative the three Conjectures 3.8--3.10 from~\cite{Sun}.

\begin{theorem}
\label{thm:ADF}
For each ${\bf r}$ such that $r>1$, there exists $n_r$ such that the sequence $(S^{({\bf r})}_{n+1})^{1/(n+1)}/(S^{({\bf r})}_n)^{1/n}$ is strictly decreasing for $n\ge n_{\bf r}$.
\end{theorem}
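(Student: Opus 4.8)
The plan is to follow the strategy of the proof of Theorem~\ref{thm:Bern}: write $v_n=\log S^{({\bf r})}(n)$ and reduce the claim to the eventual negativity of $\Delta^{(2)}(v_n/n)$. Indeed, taking logarithms, the ratio $S^{({\bf r})}(n+1)^{1/(n+1)}/S^{({\bf r})}(n)^{1/n}$ is strictly decreasing exactly when $\Delta^{(1)}(v_n/n)=v_{n+1}/(n+1)-v_n/n$ is strictly decreasing in $n$, i.e.\ when $\Delta^{(2)}(v_n/n)<0$. So everything reduces to an asymptotic expansion of $v_n$ that is precise enough to reveal the sign of the leading term of this second difference.

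First I would obtain the asymptotics of $S^{({\bf r})}(n)$ by Laplace's method. Writing the $k$th summand as $t_{n,k}$ and setting $k=xn$, Stirling's formula~\eqref{eq:Stirling} applied to each binomial gives $\tfrac1n\log t_{n,xn}=f(x)+O((\log n)/n)$, where
\[
f(x)=r_0H(x)+\sum_{j=1}^m r_j\Big[(1+jx)\log(1+jx)-x\log x-(1+(j-1)x)\log(1+(j-1)x)\Big]
\]
and $H(x)=-x\log x-(1-x)\log(1-x)$. A direct computation shows that every summand is strictly concave: for the $j$th bracket one finds $g_j''(x)=\tfrac{j^2}{1+jx}-\tfrac1x-\tfrac{(j-1)^2}{1+(j-1)x}$, and applying the mean value theorem to $a\mapsto a^2/(1+ax)$ reduces $g_j''(x)<0$ to the trivial inequality $0<1$; combined with $H''<0$ and $r_0>0$ this gives $f''<0$ on $(0,1)$. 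Since $f'(0^+)=+\infty$ and $f'(1^-)=-\infty$, the function $f$ attains its maximum at a unique $\alpha\in(0,1)$ with $f''(\alpha)<0$. Laplace's method then yields, with $\lambda=e^{f(\alpha)}$,
\[
S^{({\bf r})}(n)=D\,n^{(1-r)/2}\lambda^{n}\Big(1+\frac{a_1}{n}+O(n^{-2})\Big),\qquad D>0,
\]
the exponent $(1-r)/2$ arising because the peak term carries a factor $n^{-r/2}$ (one $n^{-1/2}$ from each of the $r$ binomial factors, counted with multiplicity) while the Laplace width contributes $n^{1/2}$.

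Taking logarithms produces the analogue of~\eqref{eq:10},
\[
\frac{v_n}{n}=\log\lambda+\frac{1-r}{2}\,\frac{\log n}{n}+\frac{\log D}{n}+\frac{a_1}{n^2}+R_n,\qquad R_n=O(n^{-3}),
\]
and the constant $\log\lambda$ drops out under differencing. Since $\Delta^{(2)}(\log n/n)=(2\log n)/n^3\,(1+o(1))$ while $\Delta^{(2)}(1/n)=O(n^{-3})$ and $\Delta^{(2)}(1/n^2)=O(n^{-4})$, the leading contribution is
\[
\Delta^{(2)}\Big(\frac{v_n}{n}\Big)=\frac{1-r}{2}\cdot\frac{2\log n}{n^{3}}\,(1+o(1))=\frac{(1-r)\log n}{n^{3}}\,(1+o(1)),
\]
which is negative for all large $n$ precisely because $r>1$. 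This is the single point at which the hypothesis $r>1$ enters, and it furnishes a threshold $n_{\bf r}$ beyond which the sequence is strictly decreasing.

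The main obstacle is the error control hidden in the last display: knowing only that $R_n=O(n^{-3})$ is insufficient, since the bare second difference $\Delta^{(2)}R_n$ of such a sequence need not be $o((\log n)/n^3)$. To close this gap I would treat $n$ as a continuous variable by extending the summand through the Gamma function, run Laplace's method so that the remainder becomes a genuine $C^2$ function $R(n)$ of real $n$ with $R''(n)=O(n^{-5})$, and then pass to the discrete second difference via the identity $\Delta^{(2)}R_n=R''(\xi_n)$ for some $\xi_n\in(n,n+2)$, which gives $\Delta^{(2)}R_n=O(n^{-5})$ and is thus dominated by the $(\log n)/n^3$ term. Justifying that this smoothing is legitimate for the $\Gamma$-interpolated sum and that the implied constants are uniform in $n$ is the one delicate step; the location of $\alpha$, the concavity of $f$, and the bookkeeping of the polynomial prefactor are then routine, and the finitely many small values of $n$ below $n_{\bf r}$ are checked by hand as in Theorem~\ref{thm:Bern}.
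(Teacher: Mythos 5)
Your proposal is correct, and its differencing skeleton is exactly the paper's: write $S^{({\bf r})}(n)=\exp v_n$, expand $v_n/n$ as $\log\lambda$ plus a $\frac{\log n}{n}$ term with coefficient $\frac{1-r}{2}$ plus lower-order terms, and let the second difference of $\frac{\log n}{n}$, which is $\frac{2\log n}{n^3}(1+o(1))$, dominate everything else. The genuine difference is the source of the asymptotics: you re-derive them by Laplace's method (with a clean concavity verification via the mean value theorem applied to $a\mapsto a^2/(1+ax)$), whereas the paper simply invokes McIntosh's expansion (Lemma~\ref{lem:Mc}), which delivers precisely your $D\,n^{(1-r)/2}\lambda^n\left(1+\frac{a_1}{n}+O(n^{-2})\right)$ with $\lambda=\mu$ and $a_1=R_1$. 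Your route is more self-contained, but the second-order Laplace expansion you need (the $1+a_1/n+O(n^{-2})$ correction, uniformly) is itself a nontrivial piece of analysis, which is exactly what the citation to McIntosh buys the paper for free. A small bonus of your write-up: your final sign is the correct one. The sequence is decreasing because $\Delta^{(2)}(v_n/n)=\frac{(1-r)\log n}{n^3}(1+o(1))<0$; the paper's last display misstates this as $+(r-1)\frac{\log n}{n^3}$ and calls it ``positive,'' a slip that your computation silently corrects.

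One substantive criticism: the ``main obstacle'' you identify is not an obstacle, and the Gamma-interpolation machinery you propose to overcome it should be deleted. If $R_n=O(n^{-3})$, then by the triangle inequality alone $\left|\Delta^{(2)}R_n\right|\le |R_{n+2}|+2|R_{n+1}|+|R_n|=O(n^{-3})$, and $O(n^{-3})$ \emph{is} $o\!\left(\frac{\log n}{n^3}\right)$ precisely because $\log n\to\infty$; no cancellation in the remainder is needed at that order. Cancellation is needed only for the terms of size $1/n$ and $1/n^2$, and those you have already isolated as \emph{explicit smooth} functions ($\frac{\log D}{n}$ and $\frac{a_1}{n^2}$), whose second differences you correctly compute to be $O(n^{-3})$ and $O(n^{-4})$. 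This is the same device the paper uses: it splits $f(x)=g(x)+O(x^{-2})$ with $g(x)=1+R/x$ explicit, applies the mean value theorem only to the smooth part $\frac{\log g(x)}{x}$, and bounds the non-smooth remainder crudely. So your proof closes without the $C^2$-interpolation step, and the one point you flagged as delicate and left unjustified was never needed.
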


\begin{proof}
We start with McIntosh's asymptotic formula for $S^{({\bf r})}(n)$ (see \cite{Mc}).

\begin{lemma}
\label{lem:Mc}
For each nonnegative integer $p$,
\begin{equation}
\label{eq:AsforSn}
S^{({\bf r})}(n)=\frac{\mu^{n+1/2}}{\sqrt{\nu(2\pi \lambda n)^{r-1}}} \left(1+\sum_{k=1}^p \frac{R_k}{n^k}+O\left(\frac{1}{n^{p+1}}\right)\right),
\end{equation}
where $0<\lambda<1$ is defined by
\begin{eqnarray*}
1 & = & \prod_{j=0}^m \left(\frac{(1+ j\lambda)^j}{\lambda(1+(j-1)\lambda)^{j-1}}\right)^{r_j},\\
\mu & = & \prod_{j=0}^m \left(\frac{1+j\lambda}{1+(j-1)\lambda}\right)^{r_j},\\
\nu & = & \sum_{j=0}^m \frac{r_j}{(1+(j-1)\lambda)(1+j\lambda)},
\end{eqnarray*}
and each $R_k$ is a rational function of the exponents $r_0,r_1,\ldots,r_m$ and $\lambda$.
\end{lemma}

Put $f(n)$ for the function such that
$$
S^{({\bf r})}_n=\frac{\mu^{n+1/2}}{\sqrt{\nu(2\pi \lambda n)^{r-1}}} f(n).
$$
Put $S^{({\bf r})}_n=\exp v_n$. Then
$$
\frac{v_n}{n}=\log \mu+\frac{c}{n}-\frac{(r-1)\log n}{2n}+\frac{\log(f(n))}{n},
$$
where $c=\mu^{1/2} \nu^{-1/2} (2\pi \lambda)^{(1-r)/2}.$ Thus,
\begin{eqnarray*}
\Delta^{(2)} \left(\frac{v_n}{n}\right) & = & \frac{2c}{n(n+1)(n+2)}\\
& - & \frac{(r-1)}{2} \left(\frac{\log(n+2)}{n+2}-\frac{2\log(n+1)}{n+1}+\frac{\log n}{n}\right)\\
& + & \left(\frac{\log(f(n+2))}{n+2}-\frac{2\log (f(n+1))}{n+1}+\frac{\log (f(n))}{n}\right).
\end{eqnarray*}
The argument from the proof of Theorem \ref{thm:Bern} shows that
\begin{eqnarray*}
\frac{\log(n+2)}{n+2}-\frac{2\log(n+1)}{n+1}+\frac{\log n}{n} & = & \frac{2\log n}{n(n+1)(n+2)}+O\left(\frac{1}{n^3}\right)\\
& = & \frac{2\log n}{n^3}+
O\left(\frac{1}{n^3}\right).
\end{eqnarray*}

Next, write
\begin{equation}
\label{eq:f}
f(x)=1+\frac{R}{x}+O\left(\frac{1}{x^2}\right)
\end{equation}
for some rational $R$ as in Lemma~\ref{lem:Mc}. For simplicity, put
$$
g(x)=1+\frac{R}{x}.
$$
Thus,
\begin{equation}
\label{eq:20}
\frac{\log f(x)}{x}=\frac{\log (g(x))}{x}+O\left(\frac{1}{x^3}\right).
\end{equation}
Furthermore, by the Intermediate Value Theorem, we have
\begin{eqnarray*}
\left|\Delta^{(1)}\left( \frac{\log g(m))}{m}\right)\right| & = & \left|\frac{d}{dx} \left(\frac{\log g(x)}{x}\right)\Big|_{x=\zeta\in [m,m+1]}\right|\\
& = &
\left|\frac{\zeta g'(\zeta)/g(\zeta)+\log(g(\zeta))}{\zeta^2}\right|\\
& = & O\left(\frac{1}{m^3}\right)
\end{eqnarray*}
for large enough positive integers $m$, simply by differentiating the form~\eqref{eq:20}, and using the interval for
$\zeta$.

Further, this shows that
$$
\frac{\log(f(n+2))}{n+2}-\frac{2\log (f(n+1))}{n+1}+\frac{\log (f(n))}{n}=O\left(\frac{1}{n^3}\right).
$$
Hence,
$$
\Delta^{(2)} \left(\frac{v_n}{n}\right)=(r-1)\frac{\log n}{n^3} +O\left(\frac{1}{n^3}\right),
$$
and the above expression is positive when $r>1$ for $n>n_{\bf r}$, which is what we wanted to prove.
\end{proof}

\section{Motzkin numbers, Schr\"oder numbers and Trinomial coefficients}

The $n$th Motzkin number is
$$
M_n=\sum_{k=0}^{\lfloor{n/2}\rfloor} \binom{n}{2k}\binom{2k}{k}\frac{1}{k+1}
$$
and counts the number of lattice paths from $(0,0)$ to $(n,0)$ which never dip below the line $y=0$ and which are made up only of steps
$(1,0),~(1,1)$ and $(1,-1)$.

The $n$th Schr\"oder number is
$$
S_n=\sum_{k=0}^n \binom{n}{k}\binom{n+k}{k}\frac{1}{k+1}
$$
and counts the number of lattice paths form $(0,0)$ to $(n,n)$ with steps $(1,0),~(0,1)$ and $(1,1)$ that never rise above the line $y=x$.

The  $n$th trinomial coefficient ${\rm Tr}_n$ is the coefficient of $x^n$ in the expansion of $(x^2+x+1)^n$. Its formula is
$$
{\rm Tr}_n=\sum_{k=0}^n \binom{n}{k}\binom{n-k}{k}.
$$
The following result gives a partial affirmative answer (up to the values of $n_0$) to Conjectures 3.6, 3.7 and 3.11 from~\cite{Sun}.

\begin{theorem}
Each of the sequences $(M_n^{1/n})_{n\ge n_0}$, $(S_n^{1/n})_{n\ge n_0}$ and $({\rm Tr}_n^{1/n})_{n\ge n_0}$
is strictly increasing while  each of   $(M_{n+1}^{1/(n+1)}/M_n^{1/n})_{n\ge n_0}$,
$(S_{n+1}^{1/(n+1)}/S_n^{1/n})_{n\ge n_0}$ and $({\rm Tr}_{n+1}^{1/(n+1)}/{\rm Tr}_n^{1/n})_{n\ge n_0}$ is strictly decreasing.
\end{theorem}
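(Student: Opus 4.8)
The plan is to handle all three families by a single argument that reduces each of them to the situation already treated in Theorem~\ref{thm:ADF}. The common input I need is a second-order asymptotic expansion
\[
a_n = C\,\rho^{n}\,n^{\alpha}\left(1+\frac{R}{n}+O\!\left(\frac{1}{n^{2}}\right)\right),\qquad C>0,\ \rho>1,\ \alpha<0,
\]
for $a_n\in\{M_n,S_n,{\rm Tr}_n\}$. Each of the three has a closed generating function with a single dominant, square-root--type singularity: the central trinomial coefficients satisfy $\sum_{n\ge 0}{\rm Tr}_n x^n=(1-2x-3x^2)^{-1/2}=\left((1-3x)(1+x)\right)^{-1/2}$, with dominant singularity at $x=1/3$, so $\rho=3$ and $\alpha=-1/2$; the Motzkin numbers have generating function $\left(1-x-\sqrt{(1-3x)(1+x)}\right)/(2x^2)$, again singular at $x=1/3$, giving $\rho=3$ and $\alpha=-3/2$; and the Schr\"oder numbers have generating function $\left(1-x-\sqrt{1-6x+x^2}\right)/(2x)$, whose dominant singularity is the smaller root $x=3-2\sqrt2$ of $1-6x+x^2$, giving $\rho=3+2\sqrt2$ and $\alpha=-3/2$. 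In each case the unique singularity on the circle of convergence is algebraic of square-root type and the next singularity lies strictly farther out (at $x=-1$ for the first two, at $x=3+2\sqrt2$ for the third), so the Flajolet--Odlyzko transfer theorems yield a complete asymptotic expansion in descending integer powers of $n$ of exactly the displayed shape, with only an exponentially small correction from the subdominant singularity.

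Granting this, I would set $a_n=\exp v_n$ and obtain, just as in the proof of Theorem~\ref{thm:ADF},
\[
\frac{v_n}{n}=\log\rho+\frac{\log C}{n}+\frac{\alpha\log n}{n}+\frac{\log f(n)}{n},\qquad f(n)=1+\frac{R}{n}+O\!\left(\frac{1}{n^{2}}\right).
\]
For the first assertion I would take the first iterated difference: the constant $\log\rho$ drops out, the terms $\tfrac{\log C}{n}$ and $\tfrac{\log f(n)}{n}$ contribute only $O(1/n^{2})$, and the Intermediate Value Theorem applied to $\log x/x$ (exactly as in Theorem~\ref{thm:Bern}) gives
\[
\Delta^{(1)}\!\left(\frac{\alpha\log n}{n}\right)=\alpha\left(\frac{\log(n+1)}{n+1}-\frac{\log n}{n}\right)=\frac{|\alpha|\log n}{n^{2}}\bigl(1+o(1)\bigr)>0,
\]
since $\alpha<0$; this dominates the $O(1/n^{2})$ remainder, so $\Delta^{(1)}(v_n/n)>0$ for all large $n$, i.e. $a_n^{1/n}$ is increasing from some $n_0$ on. For the second assertion I would pass to the second iterated difference, reusing verbatim the estimates from Theorem~\ref{thm:ADF}. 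There the only term of order $(\log n)/n^{3}$ comes from $\alpha(\log n)/n$, and
\[
\Delta^{(2)}\!\left(\frac{\alpha\log n}{n}\right)=2\alpha\,\frac{\log n}{n^{3}}\bigl(1+o(1)\bigr)<0,
\]
every other contribution being $O(1/n^{3})$; hence $\Delta^{(2)}(v_n/n)<0$ for all large $n$. Since $a_{n+1}^{1/(n+1)}/a_n^{1/n}=\exp\bigl(\Delta^{(1)}(v_n/n)\bigr)$, the negativity of the second difference gives the claimed strict decrease.

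The difference estimates above are routine and identical to those carried out for the Bernoulli and Ap\'ery numbers, so they require no new ideas. The main obstacle is the justification of the second-order expansion with the explicit $1+R/n+O(1/n^{2})$ correction for each family. For the trinomial coefficients this is cleanest: one factors $(1-2x-3x^2)^{-1/2}=(1+x)^{-1/2}(1-3x)^{-1/2}$, expands the analytic factor $(1+x)^{-1/2}$ about $x=1/3$, and applies the transfer theorem term by term. For the Motzkin and Schr\"oder numbers one must isolate the square-root part of the generating function at the dominant singularity, check that the algebraic prefactor is analytic there, and verify that the second singularity contributes only exponentially small terms, so that the square-root singularity alone governs the full power expansion. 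Tracking the implied constants in the $O$-terms would then make each threshold $n_0$ explicit, exactly as in the earlier theorems, which is all that the (partial) conjectures demand.
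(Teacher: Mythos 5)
Your proposal is correct, and its core is the same as the paper's: reduce each sequence to a two-term expansion $a_n=C\rho^n n^{\alpha}\left(1+R/n+O(1/n^2)\right)$ with $\rho>1$ and $\alpha<0$, set $a_n=\exp v_n$, and run the iterated-difference machinery of Theorems \ref{thm:Bern} and \ref{thm:ADF} (the paper itself notes that an expansion with the first two terms, as in \eqref{eq:f}, suffices). Where you genuinely differ is in how that input is justified and how much is written down. The paper simply quotes \eqref{eq:assMn}, \eqref{eq:assSn}, \eqref{eq:assTrn} from Flajolet--Sedgewick and Wagner and then ``gives no further details,'' whereas you derive the expansions from the three algebraic generating functions by singularity analysis and transfer theorems; this makes the proof self-contained modulo a standard result of analytic combinatorics, and as a bonus it corrects the paper: your growth rate $\rho=3$ for the central trinomial coefficients, coming from the dominant singularity of $(1-2x-3x^2)^{-1/2}$ at $x=1/3$, is the right one, while the quoted formula \eqref{eq:assTrn} with base $1+\sqrt{2}$ cannot hold for $[x^n]\,(1+x+x^2)^n$, whose ratio ${\rm Tr}_{n+1}/{\rm Tr}_n$ tends to $3$ (only the shape of the expansion is used, so the theorem is unaffected). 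You also make explicit a point the paper leaves implicit: Theorem \ref{thm:ADF} as proved handles only the second-difference (ratio) claim, so the monotonicity of $a_n^{1/n}$ requires the separate first-difference computation you supply, in which---unlike the Bernoulli case, where the dominant term is $2\log(1+1/n)$---positivity comes precisely from $\alpha<0$. Finally, your sign bookkeeping is the correct one: the decreasing ratio needs $\Delta^{(2)}(v_n/n)<0$, which is what your $2\alpha\log n/n^3$ term gives; note that the paper's final display in the proof of Theorem \ref{thm:ADF} states $(r-1)\log n/n^3$ with the sign flipped, so reusing that display literally (rather than the estimates behind it, as you do) would have produced the wrong conclusion.
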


\begin{proof}
It is similar to the proof of Theorem \ref{thm:ADF} and it is based on the existence of analogues of asymptotic expansions for $M_n,~S_n$ and ${\rm Tr}_n$ to the one of Lemma \ref{lem:Mc} (in fact, as we have seen in the proof of Theorem \ref{thm:ADF}, the existence of an expansion with the first two terms, as in \eqref{eq:f}, suffices).  For example,
\begin{equation}
\label{eq:assMn}
M_n=\sqrt{\frac{3}{4\pi n^3}} 3^n\left(1-\frac{15}{16 n}+\frac{505}{512 n^2}-\frac{8085}{8192 n^3}+\frac{505659}{524288 n^4}+O\left(\frac{1}{n^5}\right)\right)
\end{equation}
(see Example VI.3 on page 396 in \cite{FlaSed}),
\begin{equation}
\label{eq:assSn}
S_n=\sqrt{\frac{4+3{\sqrt{2}}}{4\pi n^3}} (3+2{\sqrt{2}})^n \left(1-\frac{24+9{\sqrt{2}}}{32n}+\frac{665+360{\sqrt{2}}}{1024 n^2}+
O\left(\frac{1}{n^3}\right)\right)
\end{equation}
(see \cite{Wag1}), and
\begin{equation}
\label{eq:assTrn}
{\rm Tr}_n=\sqrt{\frac{1+{\sqrt{2}}}{4\pi n}} (1+{\sqrt{2}})^n \left(1-\frac{3}{16n}+O\left(\frac{1}{n^2}\right)\right)
\end{equation}
(see \cite{Wag2}). We give no further details.
\end{proof}

\section{Acknowledgements}

We thank Stephan Wagner for useful correspondence and for providing us the asymptotics \eqref{eq:assSn} and Zhi-Wei Sun for providing some references. This paper was written during a visit of P.~S. to the Centro de Ciencias Matem\'aticas de la UNAM in Morelia in August 2012. During the preparation of this paper,  F.~L. was supported in part by Project
PAPIIT IN104512 (UNAM),  VSP N62909-12-1-4046 (Department of the US Navy, ONR)  and a Marcos Moshinsky fellowship.


\begin{thebibliography}{99}

\bibitem{AS} M. Abramowitz, I. A. Stegun, (Eds.), ``Bernoulli and Euler Polynomials and the Euler-MacLaurin Formula'', \S23.1 in {\em Handbook of Mathematical Functions with Formulas, Graphs, and Mathematical Tables}, 9th printing, New York: Dover, 1972.

\bibitem{BBD}
J. M. Borwein, P. B. Borwein, K. Dilcher, ``Pi, Euler Numbers, and Asymptotic Expansions'',
  {\em American Math. Monthly} {\bf 96:8} (1989), 681--687.

\bibitem{FlaSed} P. Flajolet and R. Sedgewick, ``Analytic combinatorics'', Cambridge U. Press, 2009.

\bibitem{HSW12} Q.--H. Hou, Z.--W. Sun, H. We,
{\em On monotonicity of some combinatorial sequences}, {\it Preprint\/}, http://arxiv.org/pdf/1208.3903.

\bibitem{Mc} R.~McIntosh, ``An asymptotic formula for binomial sums'', {\it J. Number Theory\/} {\bf 58} (1996), 158--172.

\bibitem{Sun} Z.--W. Sun, ``Conjectures involving combinatorial sequences'', {\it Preprint\/}, http://arxiv.org/pdf/1208.2683.

\bibitem{Wag1} S.~Wagner, Personal communication, August 21, 2012.

\bibitem{Wag2} S.~Wagner, ``Asymptotics of generalized trinomial coefficients'', {\it Preprint\/}, arXiv: 1205.5402v3.


\end{thebibliography}
\end{document}